\newcommand{\goesto}{\rightarrow}
\newcommand{\real}{\mathbb{R}}
\newcommand{\NP}{\mathcal{NP}}
\newcommand{\bE}{\mathbb{E}}
	\newtheorem{defn}{Definition}
	\newtheorem{lem}{Lemma}
	\newtheorem{theorem}{Theorem}
	\newtheorem{remark}{\textbf{Remark}}{}
\renewenvironment{proof}{\noindent\textbf{Proof:}}{}
\newcommand{\oprocendsymbol}{\hbox{$\bullet$}}
\newcommand{\oprocend}{\relax\ifmmode\else\unskip\hfill\fi\oprocendsymbol}
\newcommand{\Infect}{\mathbb{I}}
\newcommand{\Cost}{\mathcal{C}}
\newcommand{\Graph}{\mathcal{G}}
\newcommand{\Nodes}{\mathcal{V}}
\newcommand{\Edges}{\mathcal{E}}
\newcommand{\HealTime}{\mathcal{H}} 
\newcommand{\SusNodes}{\mathcal{S}}
\newcommand{\InfectNodes}{\mathcal{I}}
\newcommand{\Neighborsi}{\mathcal{N}_i}
\newcommand{\ProNodes}{\mathcal{P}}
\newcommand{\tune}{\mu}
\newcommand{\Labels}{\mathcal{L}}
\newcommand{\Action}{\mathcal{A}}
\begin{document}
\title{\huge Control of Generalized Discrete-time SIS Epidemics via\\ Submodular Function Minimization}
\author{%
	Nicholas J. Watkins and George J. Pappas%
	\thanks{N.J. Watkins and G.J. Pappas are with the Department of Electrical and Systems Engineering, University of Pennsylvania, Philadelphia, PA 19104, USA, {\tt\small \{nwatk,pappasg\}@seas.upenn.edu}. }}
\maketitle
\begin{abstract}
	In this paper, we study a novel control method for a generalized $SIS$ epidemic process.  In particular, we use predictive control to design optimal protective resource distribution strategies which balance the need to eliminate the epidemic quickly against the need to limit the rate at which protective resources are used.  We expect that such a controller may be useful in mitigating the spread of biological diseases which do not confer immunity to those who have been infected previously, with sexually transmitted infections being a prominent example of such.  Technically, this paper provides a novel contribution in demonstrating that the particular combinatorial optimal control problem used to design resource allocations has an objective function which is submodular, and so can be solved in polynomial time despite its combinatorial nature.  We test the performance of the proposed controller with numerical simulations, and provide some comments on directions for future work.
\end{abstract}

\section{Introduction} \label{sec:intro}

Due to increased population mobility and a rapidly changing climate, the likelihood of a major biological epidemic occuring in modern times is significant \cite{Arias2009}.  It is therefore prudent to invest effort into understanding the control of epidemics.  Epidemic control is not a new field of research.  Indeed, the modern study of epidemic processes dates back to at least the 1920s\cite{Kermack1927}, with work focusing specifically on the optimal control of spreading processes dating back to at least the 1970s \cite{Wickwire1975,Sethi1978,Behncke2000,Elena2013}.  Notably, nearly all such work focuses on the analysis and control of mean-field approximations to epidemic processes (see, e.g., \cite{Nowzari2016} for a recent review).  While known to be accurate for homogeneous populations asymptotically (i.e. in the limit of large populations) \cite{Gast2012}, it is unclear how well mean-field approximations work in practical settings, as they ignore the underlying stochastic nature of the contacts which drive disease dissemination.  This has given rise to recent work studying the control of epidemics without using mean-field approximation.  

It was shown in \cite{Drakopoulos2014} that varying the healing rate of an $SIS$ process sufficiently aggressively enables a controller to drive the contagion out of the network quickly.  It was shown in \cite{Scaman2016} that for any priority order strategy, there exists a sufficiently large budget so as to guarantee that the epidemic is driven out of the network quickly.  It was shown in \cite{Watkins2017c} that it is possible to construct a system of ordinary differential equations which gives provable upper- and lower- bounds on infection statistics, and to use these approximations to enforce a stability constraint that guarantees fast extinction of the epidemic.  However, in all of these works, the optimization problems which fundamentally characterize the controllers are $\NP$-hard.  As such, only suboptimal solutions may be obtained, where the performance attained may vary.

It is interesting to consider if there is any way to design predictive controllers for stochastic spreading process models in such a way that the underlying optimization is tractable.  Indeed, it has been known for some time that optimizing seed selection under general threshold spreading models is a submodular maximization problem \cite{Kempe2003, Mossel2007}, and so can be well approximated by a greedy algorithm.  Similar sorts of structural properties have been identified in many variations of problems on threshold-type spreading models, (see, e.g., \cite{He2016, Kuhlman2013,Bogunovic2012}), but such models are only appropriate for modeling non-recurrent epidemics.  That is, they study processes in which each agent in the network can only actively spread the phenomenon for one interval of time, and afterwards remains inactive.  While such a feature seems appropriate in many contexts, e.g. when a disease confers immunity to survivors, this is not always the case.  Indeed, many sexually transmitted infections (e.g. chlamydia, gonorrhea) do not confer such immunity (see, e.g., \cite{Kretzschmar1996}).

The study of discrete-time models with recurrent compartmental memberships seems to have originated in \cite{Gomez2010}, where the stability of a deterministic approximation to a discrete-time Susceptible-Infected-Susceptible (SIS) model was studied.  Since then, deterministic approximations to discrete-time models have been studied in a few contexts \cite{Ahn2014,Ruhi2015,Ruhi2016,Eksin2017}.  While it is known that in some cases the open-loop stability of the deterministic approximation implies a similar notion of open-loop stability of the stochastic model \cite{Ahn2014,Ruhi2015,Ruhi2016}, it appears that \cite{Watkins2017} was the first to consider closed-loop control of such processes.  Whereas \cite{Watkins2017} studied the control of the discrete-time $SIS$ process by direct control of the process parameters, here we study the control of a generalized $SIS$ model by way of direct allocation of protective resources.

\paragraph*{\textbf{Statement of Contributions}}
\emph{To the best of the authors' knowledge, this paper is the first to develop a feedback controller for discrete-time, recurrent epidemic processes using node removal as a means of actuation.}  With respect to prior work, the present article builds on a preliminary paper \cite{Watkins2017}, which studied the control of a discrete-time $SIS$ process by direct control of the process parameters.  Principally, our work here differs in that we focus on control via node removal.  This change in focus allows us to model control actions more realistically: protective devices (e.g. latex gloves, barriers, condoms) are discrete objects by nature, and so should be treated as such in the model.  

The primary technical contribution of this work is in demonstrating that a particular combinatorial optimal control problem can be posed as a submodular minimization problem with respect to a ground set that is a subset of the set of nodes, and so can be solved efficiently (i.e., in polynomial time with respect to the size of the graph).  Additionally, we investigate the performance of the controller by way of numerical simulations, and provide some comments on directions for future work which we believe to be promising.

\section{Problem Statement} \label{sec:problem}

In this section, we formally develop the problem studied in the paper.  Section \ref{subsec:epidemic} details the generalized $SIS$ model studied.  Section \ref{subsec:actuation} details the actuation model studied.  Section \ref{subsec:opt_con} details the optimal control problem studied.

\subsection{Epidemic Model} \label{subsec:epidemic}
We study an epidemic process on an $n$ node graph $\Graph,$ with node set $\Nodes,$ and edge set $\Edges.$  Epidemic processes are mathematical objects which evolve on graphs, in which each node is considered to be an agent, and each edge a relationship between agents.  The current health status of an agent is modeled by a collection of compartments, where at each time, every agent belongs to exactly one compartment.  The process we study has one susceptible (i.e. healthy) compartment (denoted by $S$), and $p$ infected spreading compartments (denoted by $\{I_k\}_{k \in [p]}$).  A transition from susceptibility to infection comes from contact between a susceptible node and an infected node.  Other transitions are due to random events internal to the agent (e.g. progressing through the stages of the disease).  Note that the multiplicity of infected compartments here is used to shape the amount of time an agent spends in infection.

We denote by $X_i^{\ell}(t)$ an indicator random variable which takes the value $1$ if node $i$ is in compartment $\ell$ at time $t,$ and takes the value $0$ otherwise.  We take compartmental memberships to evolve in a way that generalizes the standard discrete-time $SIS$ process \cite{Gomez2010}.  A node $i$ transitions from susceptible to the first infected compartment (i.e. $I_1$) on the increment from time $t$ to time $t+1$ through contact with nodes which are in an infected compartment at time $t.$  Given that a node $i$ is infected at time $t,$ it transitions to other model compartments independently of all external phenomena.
We then have that the indicator $X_i^{S}$ evolves as
\begin{equation}
	\label{eq:S}
	X_i^{S+} = X_i^{S}(1 - Z_{i(X)}^{S \goesto I_1}) + \sum_{k = 1}^{p} X_i^{I_k} Z_{i}^{I_k \goesto S},
\end{equation} 
where the random variable $Z_{i(X)}^{S \goesto I_1}$ is an indicator that at least one infection event (indicated by random variables $Y_{ij}$) has occured on an edge between node $i$ and an infected node $j$ in the set of neighbors of $i$ (denoted $\Neighborsi$), and is defined as
\begin{equation}
\label{eq:z_def}
Z_{i(X)}^{S \goesto I_1} \triangleq \min \{1, \sum_{j \in \Neighborsi} Y_{ij} \sum_{k = 1}^{p} X_j^{I_k} \},
\end{equation}
and the random variables $Z_{i}^{I_k \goesto S}$ are indicators denoting a transition from the $k$'th infected compartment to susceptibility.  Likewise, we have that the indicator $X_i^{I_1}$ evolves as  
\begin{equation}
\label{eq:I1}
X_i^{I_{1}+} = X_i^{S}Z_{i(X)}^{S \goesto I_1} + \sum_{k = 1}^{p} X_i^{I_k} Z_{i}^{I_k \goesto I_1} - X_i^{I_1} \sum_{\ell \in \Labels} Z_{i(X)}^{I_1 \goesto \ell},
\end{equation}
and the indicators $X_i^{I_k}$ for $k \neq 1$ evolve as
\begin{equation}
\label{eq:Ik}
X_i^{I_{k}+} = \sum_{\ell \in \Labels \setminus \{S\}} X_i^{\ell} Z_{i}^{\ell \goesto I_{k}} - X_i^{I_k} \sum_{\ell \in \Labels} Z_{i}^{I_k \goesto \ell},
\end{equation}
where $\Labels$ is the set of all compartmental labels.
Note that for all nodes to belong to a unique compartment at all times, we must have that $Z_{i(X)}^{\ell \goesto \ell^\prime}$ each take values on $\{0,1\}$ and satisfy $\sum_{\ell^\prime} Z_{i(X)}^{\ell \goesto \ell^\prime} = 1$ with probability one.  

To make matters concrete, we fix one particular way of enforcing this.  We assume all $Y_{ij}$ in \eqref{eq:z_def} are independently distributed Bernoulli random variables with known success probabilities, and that when a node is infected, the compartmental membership random variables evolve independently as a discrete-time Markov chain with $p+1$ states, structured so that $S$ is its unique absorbing state, and that $S$ is reached in finite time with probability one.  This level of generality allows us to model the amount of time taken to recover from an infection with more precision than a standard $SIS$ epidemic, in which there is only one infected compartment, and $Z_{i}^{I_1 \goesto S}$ is an independent Bernoulli random variable at all times.  Such an assumption forces the time taken to recover to be distributed as a geometric random variable.  Under the model specified here, the time taken to recover follows a discrete phase-type distribution, and so is quite general \cite{Bobbio2003}.

\subsection{Actuation Model} \label{subsec:actuation}

We consider allocating protective barriers in order to preventing the spread of an infection.  Formally, our controller actuates the process detailed in Section \ref{subsec:epidemic} by selecting a subset of nodes $\ProNodes$ to protect against infection.  Because protective devices (e.g. latex gowns, gloves, condoms) are often single-use, it is appropriate to model the economic cost of protecting the set of nodes $\ProNodes$ as being the sum over all edges in which one adjacent node is a susceptible node that is protected, and the other adjacent node is infected.  

That is, we only pay to protect a particular node if it interacts with an infected person, and we pay in proportion to the extent of interaction between the protected node and its infected neighbors over the discretized time period.  For example, if we are allocating condoms to mitigate the spread of chlamydia and we want to protect a particular person $i,$ we will provide $i$ with one condom for each contact between it and all infected partners.  Mathematically, we have
\begin{equation} 
	\label{eq:cost_def}
	\Cost(\ProNodes|X) \triangleq \sum_{i \in \SusNodes(X)} \mathds{1}_{\{i \in \ProNodes \}} \sum_{j \in \Neighborsi \cap \InfectNodes(X)} c_{ij},
\end{equation}
where $\mathds{1}_{\{ \centerdot \}}$ is a $\{0,1\}$ indicator function, the non-negative constants $c_{ij}$ model the cost of providing a protective barrier for all interactions between $i$ and $j$ over one time period, $\SusNodes(X)$ denotes the set of susceptible nodes at state $X,$ and $\InfectNodes(X)$ denotes the set of infected nodes at state $X.$  We model protecting a node by removing it from the graph.  That is, a node $i$ which is in the set of protected nodes $\ProNodes$ and is susceptible at time $t$ is susceptible at time $t+1$ with probability one.  Mathematically, we then have that the controlled dynamics for $X_i^{S}$ follow  
\begin{equation}
\label{eq:Scontrol}
X_i^{S+} = X_i^{S}(1 - \mathds{1}_{\{i \notin \ProNodes\}}Z_{i(X)}^{S \goesto I_1}) + \sum_{k = 1}^{p} X_i^{I_k} Z_{i}^{I_k \goesto S},
\end{equation} 
and the controlled dynamics for $X_i^{I_1}$ are changed similarly.

\subsection{Optimal Control Problem} \label{subsec:opt_con}

If we were strictly concerned with minimizing the total accumulated cost for our controller, it is feasible to take the action $\ProNodes = \emptyset$ at all times.  Doing so incurs zero cost.  However, this may result in the infection persisting in the population for a long time.  This is a socially undesirable outcome.  In general, we may wish to consider the presence of infection as a sort of soft cost imposed on the controller.

Previous works studying optimal control of epidemics (e.g., \cite{Wickwire1975,Sethi1978,Behncke2000,Elena2013}) often do so by posing a problem of the form
\begin{subequations}
	\begin{equation}
		\label{prog:inf_hor}
		\min_{\pi \in \Pi} \bE_{\pi}  [\sum_{\tau = 0}^{\infty} J(X(\tau), \theta(\tau) ) | X ],
	\end{equation}
	\begin{equation}
		\label{eq:inf_hor_cost}
		J(X,\theta) \triangleq \tune c(\theta) + (1-\tune) q(\Infect(X))
	\end{equation}
\end{subequations}
where $\Pi$ is the set of all non-anticipating control policies which map observations of the process as it evolves to changes in the spreading model's parameters $\theta,$ $c(\theta)$ is the instantaneous cost of setting the processes parameters to $\theta,$ $\Infect(X)$ is a function which maps the state $X$ to the number of infected nodes in state $X,$ and the function $q$ is some non-negative valued function which determines the extent to which we should care about the existence of infection.  In our case, if it were so that the process $\{X(t)\}$ was a Markov chain on a small state space and the set of possible control actions were small, \eqref{prog:inf_hor} could be solved by treating it as a Markov decision process and applying a standard solution technique, e.g. value iteration.  However, even for standard $SIS$ processes, $\{X(t)\}$ evolves as a Markov chain with $2^n$ states, and there are $2^n$ possible choices of the set $\ProNodes.$  That is, the complexity of using a standard Markov devision process algorithm here is exponential, due to the curse of dimensionality.  As such, it is an interesting task to construct a controller which allows for the same qualitative tradeoff as \eqref{prog:inf_hor}, but is computationally tractable to implement.

To accomplish this, we consider applying controls which solve the infinite-horizon optimal control problem
\begin{subequations}
	\begin{equation} 
	\label{prog:mpc}
	\min_{\ProNodes \subseteq \Nodes} h_{X}(\ProNodes)
	\end{equation}
	\begin{equation}
	\label{eq:weighted_cost}
	h_{X}(\ProNodes) \triangleq \mu \Cost(\ProNodes|X) + (1-\mu) Q(\ProNodes|X)
	\end{equation}
\end{subequations}
where $\Cost(\ProNodes|X)$ is the cost function \eqref{eq:cost_def}, and $Q(\ProNodes |X)$ is the cost of applying the rollout policy which protects all susceptible nodes for all future times, given that the set of nodes $\ProNodes$ is protected at the current time (see, e.g., \cite[Section 6.4]{BertsekasDynV1} for background on the use of rollout policies in infinite horizon optimal control).  Intuitively, our controller designs the protection set $\ProNodes$ at the current time while \emph{anticipating} that at all future times, every reasonable action will be implemented to eradicate infection as quickly as possible.  Thus, our controller anticipates that a cost $c_{ij}$ occurs at every time where exactly one of the node $i$ or $j$ is susceptible.  That is, $Q$ is defined mathematically as
\begin{equation}
	\label{eq:Qdef}
	\begin{aligned}
	&Q(\ProNodes|X) \triangleq \\
	&\hspace{40 pt}\bE_{\Theta(\ProNodes)}[\sum_{\tau = 1}^{\infty} \sum_{\{i,j\} \in \Edges} (X_i^{S}(\tau) \oplus X_j^{S}(\tau)) c_{ij} | X],
	\end{aligned}
\end{equation}
	where $\Theta(\ProNodes)$ is the measure induced by protecting the set of nodes $\ProNodes$ at the current time, and by $\oplus$ we denote the exclusive or operator, i.e. for two variables $Y,Z \in \{0,1\},$
\begin{equation} \label{eq:xor}
	Y \oplus Z \triangleq 
	\begin{cases}
	0, & Y = 0, Z = 0;\\
	1, & Y = 1, Z = 0;\\
	1, & Y = 0, Z = 1;\\
	0, & Y = 1, Z = 1.
	\end{cases}
\end{equation}  

Note that because we update our decision $\ProNodes$ at every time, this total protection strategy is not actually implemented.  Rather, $Q$ plays the same role in \eqref{prog:mpc} as $q(\Infect)$ plays in \eqref{prog:inf_hor}.  It penalizes the existence of infection in the network, and so allows a control designer a means for trading off between an immediate resource expenditure and the rate of decay in the population of infected individuals by appropriately selecting $\tune.$  However, it is not immediately clear that actions can be efficiently computed as in \eqref{prog:mpc}, as the set over which the optimum must be computed has $2^n$ elements, and it is not clear that the objective $h_{X}$ is sufficiently well structured so as to enable efficient optimization.

\emph{In the body of this paper, we are concerned with determining whether solutions of \eqref{prog:mpc} can be computed efficiently (i.e. in polynomial time with respect to the size of the graph), and whether the engendered controller provides reasonable behavior.}  We address computational efficiency in Section \ref{subsec:computing}, in which we show that the objective of the minimization in \eqref{prog:mpc} is submodular, which allows us to minimize $h_{X}$ using polynomial time algorithms.  We assess the controller's behavior in Section \ref{sec:example} with a numerical example.

\section{Predictive Control of Generalized $SIS$} \label{sec:optimization}

In this section, we develop the mathematical foundations required to efficiently implement a controller as described in Section \ref{sec:problem}.  Section \ref{subsec:math} provides some technical preliminaries which are needed in our analysis.  Section \ref{subsec:computing} demonstrates that the optimization problem \eqref{prog:mpc} has sufficient structure so as to allow for the use of efficient optimization algorithms, and provides some references to software packages that can be used to solve problem.

\subsection{Mathematical Preliminaries} \label{subsec:math}
The key mathematical concept which allows for the efficient solution of \eqref{prog:mpc} is submodularity.  Submodularity is a mathematical formalization of the concept of diminishing returns: adding an object to a larger set has less of an impact than adding the same object to a smaller set. Formally, a submodular function satisfies the following definition.

\begin{defn}[Submodular Functions] \label{defn:submod}
	Let $\Omega$ be a finite ground set of objects, and suppose $f:2^\Omega \goesto \real,$ where $2^{\Omega}$ denotes the power set of $\Omega,$ i.e. the set of all subsets of $\Omega.$  The function $f$ is said to be \emph{submodular} if and only if
	\begin{equation} \label{ineq:submod}
		f(Z \cup \{z\}) - f(Z) \leq f(Y \cup \{z\}) - f(Y)
	\end{equation}
	holds for all $Y \subset Z \subseteq \Omega,$ and $z \in \Omega \setminus Z.$ \oprocend
\end{defn} 

It is frequently the case that the submodularity of a complicated function is verified by reducing the proof to checking the submodularity of a simpler function.  We use such an argument later (Section \ref{subsec:computing}), using the exclusive-or function as the simple function, which is submodular:
\begin{lem}[Submodularity of Restricted Exclusive Or] \label{lem:xor}
	Let $\Omega$ be a finite ground set, take $a, b \in \Omega,$ $A,B \in \{0,1\},$ and let $\oplus$ denote the operator defined by \eqref{eq:xor}.  The function
	\begin{equation*}
		f(W) = A \mathds{1}_{\{a \notin W\}} \oplus B \mathds{1}_{\{b \notin W\}},
	\end{equation*}
	is submodular.
\end{lem}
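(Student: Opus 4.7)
My plan is to verify the submodularity inequality \eqref{ineq:submod} directly, exploiting the observation that $f(W)$ depends on $W$ only through the pair of Boolean values $\mathds{1}_{\{a \in W\}}$ and $\mathds{1}_{\{b \in W\}}$. I would structure the argument as a case analysis on the constants $(A,B) \in \{0,1\}^2$. The three cases with $AB = 0$ are essentially trivial: the function either vanishes identically or reduces to a single indicator of the form $A\,\mathds{1}_{\{a \notin W\}}$, which is a modular (and therefore submodular) set function, since its marginal for adding an element $z$ is $-A$ if $z = a$ and $0$ otherwise, independent of the base set.

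The only nontrivial case is $A = B = 1$, on which I would focus the bulk of the proof. Fix $Y \subset Z \subseteq \Omega$ and $z \in \Omega \setminus Z$. Because $f$ is insensitive to the membership of any element outside $\{a,b\}$, the marginals $f(W \cup \{z\}) - f(W)$ are identically zero whenever $z \notin \{a,b\}$, and \eqref{ineq:submod} holds with equality. By the symmetry of $\oplus$ in its two arguments, it then suffices to treat $z = a$. Here a direct inspection of the truth table \eqref{eq:xor}, using that $a \notin Z \supseteq Y$, will show that
\begin{equation*}
f(W \cup \{a\}) - f(W) = \begin{cases} +1, & b \notin W, \\ -1, & b \in W, \end{cases}
\end{equation*}
and the monotonicity $\mathds{1}_{\{b \in Y\}} \leq \mathds{1}_{\{b \in Z\}}$ forced by $Y \subset Z$ then immediately yields that the marginal at $Y$ dominates the marginal at $Z$ in every configuration, which is precisely \eqref{ineq:submod}. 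The case $z = b$ is handled by swapping the roles of $a$ and $b$.

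I do not expect any real obstacle in this argument; the main point is the structural observation that $f$ is a function of only two bits of $W$, which collapses submodularity to checking monotonicity of a marginal over a four-element Boolean lattice. The only step requiring minor care is confirming that the hypothesis $z \in \Omega \setminus Z$ prevents degenerate situations where $a$ (or $b$) would already lie in $Z$ or $Y$ at the moment we attempt to add it as the new element.
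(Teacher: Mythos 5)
Your proof is correct and follows essentially the same route as the paper's: dispose of the $AB = 0$ cases via (sums of) modular indicators, then for $A = B = 1$ reduce to a finite check on the two bits $\mathds{1}_{\{a \in W\}}$ and $\mathds{1}_{\{b \in W\}}$; your packaging via monotonicity of the marginal $f(W \cup \{a\}) - f(W)$ in $\mathds{1}_{\{b \in W\}}$ is a slightly tidier version of the paper's explicit enumeration of which of $a, b$ lie in $Z$ and $Y$. One small omission: the lemma does not assume $a \neq b$, and when $a = b$ with $A = B = 1$ your displayed marginal formula is wrong (the true marginal is $0$, since then $f \equiv 0$); the paper disposes of this degenerate case separately at the outset, and you should do the same before asserting the $\pm 1$ truth-table computation.
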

\begin{proof}
	Note that if $A =B = 0,$ then $f(W) = 0,$ and is trivially submodular.  Note also that if $A \neq B,$ then $f(W) = A \mathds{1}_{\{a \notin W\}} + B \mathds{1}_{\{b \notin W\}},$ which is non-negative sum of submodular functions, so is submodular~\cite{McCormick2005a}.  It remains to consider $A = B = 1.$  In this case, if $a = b,$ we have $f(W) = 0,$ which is trivially submodular.  It remains to consider the case in which $a \neq b.$
	
	Take $Z \supseteq Y,$ $z \in \Omega \setminus Z,$ and suppose neither $a$ nor $b$ are in $Z.$  By $Z \supseteq Y,$ we have that neither $a$ nor $b$ are in $Y.$  If $z \in \{a,b\},$ then $f(Z \cup \{z\}) - f(Z) = 1 = f(Y \cup \{z\}) - f(Y).$  If $z \notin \{a,b\},$ then $f(Z \cup \{z\}) - f(Z) = 0 = f(Y \cup \{z\}) - f(Y).$
	
	Now, take $Z \supseteq Y,$ $z \in \Omega \setminus Z,$ and suppose both $a$ and $b$ are in $Z.$  The condition $z \in \Omega \setminus Z$ implies that $z \notin \{a,b\},$ and so $f(Z \cup \{z\}) - f(Z) = 0 = f(Y \cup \{z\}) - f(Y).$ 
	
	Finally, take $Z \supseteq Y,$ $z \in \Omega \setminus Z,$ and suppose exactly one of $a$ or $b$ is in $Z.$  For concreteness, suppose $a \in Z.$  There are two cases to consider.  First, suppose $a \notin Y.$  Then, if $z \notin \{b\},$ we have $f(Z \cup \{z\}) - f(Z) = 0 = f(Y \cup \{z\}) - f(Y).$  If $z \in \{b\},$ then $f(Z \cup \{z\}) - f(Z) = -1 \leq f(Y \cup \{z\}) - f(Y) = 1.$  Now, suppose $a \in Y.$  If $z \notin \{b\},$ then $f(Z \cup \{z\}) - f(Z) = 0 = f(Y \cup \{z\}) - f(Y).$  If $z \in \{b\},$ then $f(Z \cup \{z\}) - f(Z) = -1 = f(Y \cup \{z\}) - f(Y).$  \oprocend
\end{proof}

\subsection{Efficiently Computing Optimal Sets of Protected Nodes} \label{subsec:computing}

A principal reason that submodularity is an important concept is that it allows for a variety of combinatorial optimization problems to be solved (or approximately solved) in polynomial time.  In this subsection, we demonstrate that \eqref{prog:mpc} is one such problem.  As it is well-known that the minimum of a submodular function over a finite ground set can be computed in time which grows polynomially with respect to the size of the ground set (see, e.g., \cite{McCormick2006,Iwata2007}), it suffices for us to demonstrate that the objective of \eqref{prog:mpc} is a submodular function.  We accomplish this in the following result:

\begin{theorem}[Submodularity of Objective Function]
	\label{thm:submodular}
	Fix a particular generalized $SIS$ process (as detailed in Section \ref{subsec:epidemic}), a number $\tune \in [0,1],$ and a state $X.$  Let $\Cost$ be defined as in \eqref{eq:cost_def}, and $Q$ as in \eqref{eq:Qdef}.  Then, the function
	$h_{X}(\ProNodes)$ as defined by \eqref{eq:weighted_cost} is a submodular set function, where the ground set is taken to be the set of susceptible nodes at state $X,$ $\SusNodes(X).$
\end{theorem}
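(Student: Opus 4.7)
The plan is to decompose $h_X = \mu\, \Cost(\cdot|X) + (1-\mu)\, Q(\cdot|X)$ and show each summand is submodular on the ground set $\SusNodes(X)$; since non-negative conic combinations preserve submodularity, this yields the theorem. The first piece is straightforward: by \eqref{eq:cost_def}, for $\ProNodes \subseteq \SusNodes(X)$ one has $\Cost(\ProNodes|X) = \sum_{i \in \ProNodes} w_i$ with $w_i \triangleq \sum_{j \in \Neighborsi \cap \InfectNodes(X)} c_{ij} \geq 0$, which is modular and therefore submodular. The entire technical burden lies in analyzing $Q$.

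My strategy for $Q$ is to exploit the rollout's structure to factor each $X_i^S(\tau)$ into a $\ProNodes$-dependent piece and a $\ProNodes$-independent random piece, and then apply Lemma \ref{lem:xor} pointwise in the sample space. The key structural observation is that under the rollout, at every time $\tau \geq 1$ the controller protects every susceptible node, so \emph{no} $S \to I_1$ transition can occur after time $0$. Consequently the entire future trajectory of each node is driven by only two ingredients: whether the node lies in $\ProNodes$ (relevant only at time $0$ for $i \in \SusNodes(X)$), and the exogenous Bernoullis $Y_{ij}$ governing time-$0$ infections from neighbors in $\InfectNodes(X)$, together with the internal recovery Markov chains of currently infected nodes. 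Crucially, this second ingredient is independent of $\ProNodes$: a protected susceptible node cannot be infected at time $0$ and no susceptible node infects others beyond time $0$, so altering $\ProNodes$ cannot change any other node's trajectory.

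Using this, I would define, for each node $i$ and each $\tau \geq 1$, a $\{0,1\}$-valued random variable $A_i(\tau)$ on the primitive probability space (independent of $\ProNodes$) satisfying
\begin{equation*}
X_i^S(\tau) \;=\; 1 - A_i(\tau)\, \mathds{1}_{\{i \notin \ProNodes\}}.
\end{equation*}
For $i \in \InfectNodes(X)$, set $A_i(\tau) = 1 - X_i^S(\tau)$ (read off $i$'s internal Markov chain); the indicator is automatically $1$ since $\ProNodes \subseteq \SusNodes(X)$. For $i \in \SusNodes(X)$, let $A_i(\tau)$ be the counterfactual indicator that $i$ is not susceptible at time $\tau$ when $i$ is left unprotected; when $i \in \ProNodes$ the formula correctly forces $X_i^S(\tau) = 1$. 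Verifying this factorization --- in particular, that the counterfactual $A_i(\tau)$ does not inadvertently depend on whether \emph{other} susceptible nodes lie in $\ProNodes$ --- is the main delicate step, and it rests precisely on the observation above that no susceptible node infects a neighbor under the rollout.

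Finally, the elementary identity $Y \oplus Z = (1-Y) \oplus (1-Z)$ on $\{0,1\}$ variables lets me rewrite
\begin{equation*}
X_i^S(\tau) \oplus X_j^S(\tau) \;=\; A_i(\tau)\, \mathds{1}_{\{i \notin \ProNodes\}} \;\oplus\; A_j(\tau)\, \mathds{1}_{\{j \notin \ProNodes\}},
\end{equation*}
which by Lemma \ref{lem:xor} (with ground set $\SusNodes(X)$, and $A, B, a, b$ chosen as $A_i(\tau), A_j(\tau), i, j$) is submodular in $\ProNodes$ for every fixed realization, edge, and time. Weighting by $c_{ij} \geq 0$, summing over $\{i,j\} \in \Edges$ and $\tau \geq 1$, and taking expectation each preserve submodularity, so $Q(\cdot|X)$ is submodular; combining with the modularity of $\Cost(\cdot|X)$ completes the proof.
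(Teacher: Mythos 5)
Your proposal is correct and follows essentially the same route as the paper's own proof: decompose $h_X$ into the modular term $\Cost$ and the rollout term $Q$, observe that under the rollout no new infections occur after the first transition so each node's future trajectory factors into $\mathds{1}_{\{i \notin \ProNodes\}}$ times a $\ProNodes$-independent sample-path quantity, and then invoke Lemma \ref{lem:xor} pointwise over the (finite) sample space before summing with non-negative weights. The only cosmetic difference is that you keep the susceptibility indicators and complement them via $Y \oplus Z = (1-Y)\oplus(1-Z)$, whereas the paper passes to infection indicators via the equivalent observation that exactly one endpoint susceptible means exactly one endpoint infected.
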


\begin{proof}
	Since a function defined by a non-negative weighted sum of submodular functions is itself a submodular function~\cite{McCormick2005a}, it suffices to show that $\Cost$ and $Q$ are both submodular functions.  We handle each separately.
	
	\paragraph*{Submodularity of $\Cost$}
	Because the constants $c_{ij}$ are non-negative, it suffices to note that indicators of the form $f(W) = \mathds{1}_{\{r \in W\}}$ are submodular.  This is straightforward: if $x \in \Omega \setminus W,$ then $f(W \cup \{x\}) - f(W) = 1$ when $x = r,$ and $0$ otherwise, regardless of the choice of $W.$  This implies that \eqref{ineq:submod} is met with equality, satisfying Definition \ref{defn:submod}.
	
	\paragraph*{Submodularity of $Q$}
	Since the generalized $SIS$ process defined in Section \ref{subsec:epidemic} is defined on a finite state space in finite time, we may construct a finite sample space for it.  Call this sample space $\Xi.$  Because having exactly one node in a pair of nodes $i$ and $j$ be susceptible is equivalent to having exactly one node in a pair of nodes belong to some infected compartment, we have
	\begin{equation*}
		\begin{aligned}
		&Q(\ProNodes | X) = \bE_{\Theta(\ProNodes)}[\sum_{\tau = 1}^{\infty} \sum_{\{i,j\} \in \Edges} (X_i^{I}(\tau) \oplus X_j^{I}(\tau)) c_{ij} | X],\\
		&= \sum_{\xi \in \Xi} \sum_{\tau = 1}^{\infty} \sum_{\{i,j\} \in \Edges} (X_i^{I}(\tau; \ProNodes, \xi) \oplus X_j^{I}(\tau; \ProNodes,\xi)) c_{ij} \Pr(\xi | X),
		\end{aligned}
	\end{equation*}
	where we denote the dependence on the choice of protected nodes and the underlying sample $\xi$ explicitly, use the notation $\Pr(\xi|X)$ for the probability that the sample $\xi$ is drawn from $\Xi,$ given that the process is currently in state $X,$ and define the shorthand notation $X_j^{I} = \sum_{k = 1}^{p} X_i^{I_k}.$  Since the terms $c_{ij}$ and $\Pr(\xi | X)$ are nonnegative, it suffices to show that the terms $(X_i^{I}(\tau; \ProNodes, \xi) \oplus X_j^{I}(\tau; \ProNodes,\xi))$ are submodular functions.
	
	Because the rollout policy removes all susceptible nodes which are adjacent to an infected node, we have that all nodes which are infected at some time $\tau \geq t+1$ were either infected at time $t,$ or became infected on the transition from time $t$ to time $t+1.$  In the case where the node $i$ was infected at time $t,$ the definition of the process has that all indicators $X_i^{\ell}(\tau; \ProNodes,\xi)$ are independent of the choice $\ProNodes,$ and are determined only by the particular choice of $\xi.$  In particular, we have that
	\begin{equation*}
	\begin{aligned}
	&X_i^{I}(\tau; \ProNodes, \xi) = \\
	&\hspace{10 pt} \begin{cases}
		\mathds{1}_{\{\tau \leq \HealTime_{i}(\xi,X_i(t))\}}, & X_i^{I} = 1;\\
		\mathds{1}_{\{i \notin \ProNodes\}} \mathds{1}_{\{ \lor_{j \in \Neighborsi} Y_{ij}(\xi) X_j^{I} \}} \mathds{1}_{\{\tau \leq \HealTime_{i}(\xi,X_i(t+1) \}}, & X_i^{S} = 1,
	\end{cases}
	\end{aligned}
	\end{equation*}
	where $\HealTime_{i}(\xi,X_i(t))$ is a random variable denoting the next time at which an event which transitions the node $i$ from infected to susceptible happens, given that the node is in its current compartment,
	and $\lor$ denotes the logical or operation, used here because all infection events have the same effect on an unprotected susceptible node.
	
	Note that in the case that $X_i^I = 1,$ submodularity follows immediately from noting that $X_i^{I}(\tau; \ProNodes, \xi)$ is unaffected by the choice $\ProNodes.$  In the case that $X_i^{S} = 1,$ we have $X_i^{I}(\tau; \ProNodes, \xi)$ is the indicator $\mathds{1}_{\{i \notin \ProNodes\}}$ multiplied by a constant in $\{0,1\},$ which is determined by $\xi.$  It follows that $Q(\ProNodes|X)$ is a non-negative weighted sum of functions of the form
	\begin{equation*}
		f(\ProNodes) = A \mathds{1}_{\{a \notin \ProNodes\}} \oplus B \mathds{1}_{\{b \notin \ProNodes\}},
	\end{equation*}
	with $A,B \in \{0,1\},$ which are submodular by Lemma \ref{lem:xor}.
	\oprocend
\end{proof}

The submodularity proven in Theorem~\ref{thm:submodular} allows us to use a variety of polynomial-time algorithms \cite{Krause2010a,McCormick2006} to solve \eqref{prog:mpc} to optimality.  In our simulations (Section \ref{sec:example}), we have used an implementation of Fujishige's minimum-norm-point algorithm \cite{Fujishige2006,Chakrabarty2014a} from the submodular function optimization package \cite{Krause2010a}.  Alternatively, one may consider using faster \emph{approximate} submodular minimization algorithms \cite{Jegelka2011,Chakrabarty2016} if the problem instance considered is too large to be handled efficiently in practice by other methods.  It is also worth noting that \eqref{prog:mpc} may possess sufficient structure to enable the development of faster algorithms for computing the exact optimum, as it is the sum of a monotone increasing modular function $\Cost(\ProNodes|X),$ and a submodular function $Q(\ProNodes|X)$ which is a weighted sum of indicators and exclusive-or operations.

\begin{figure}
	\centering
	\includegraphics[width=0.5\textwidth]{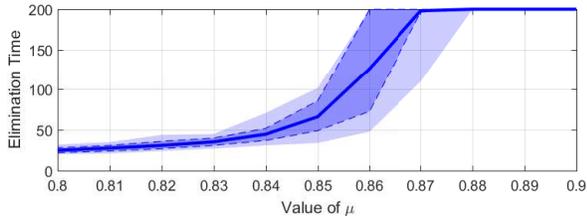}
	\caption{The effect of varying $\tune$ to tune controller.  As expected, the controller's behavior experiences a phase transition from a domain in which future cost is prioritized (here, below $\mu = 0.8$), and a domain in which instantaneous spending is prioritized (here, above $\mu =0.88$).  Note: the simulation parameters reported in the example of Section \ref{sec:example} were used to generate this figure.} \label{fig:tuning}
\end{figure}

\begin{remark}[On State and Parameter Uncertainty] \label{rem:imperfect}
	{\rm
	It may be the case in practice that neither the state $X$ nor the parameters of the spreading process are deterministically known.  However, this situation can be readily accommodated into our controller's computations.  Supposing that one has knowledge of the distribution of the state and spreading parameters, one can first sample from this, and then forward-simulate the evolution of the process with the sampled state and parameter set fixed.  Verifying the submodularity of the objective function with respect to each sample path used can be done with the same argument as in the proof of Theorem \ref{thm:submodular}, and so merits no further discussion here.  \oprocend
	}
\end{remark}

\begin{remark}[Tuning Controller Behavior] \label{rem:tuning}
	{\rm
		Selecting $\tune$ appropriately is important.  For small values of $\tune,$ the objective of \eqref{prog:mpc} is dominated by the effect of $Q(\Action | X).$  In the extreme case where $\tune = 0,$ the controller takes an action which minimizes the future cost associated to guaranteeing the infection spreads no further.  For large values of $\tune,$ the objective of \eqref{prog:mpc} is dominated by the effect of $\Cost(\Action |X).$  In the extreme case where $\tune = 1,$ it is simple to show that the optimal action will be to protect no nodes, under any circumstance.  Intuitively, intermediate values of $\tune$ balance the immediate cost of resource expenditure against its long-term consequences.  This is seen in Figure \ref{fig:tuning}.
		\oprocend}
\end{remark}

\section{A Numerical Example} \label{sec:example}

We consider a graph with $200$ nodes.  We assume that for each $i,j \in \Nodes$ such that $i\neq j,$ the edge $\{i,j\}$ is in the set of edges $\Edges$ independently with probability $0.01.$  We assume that an unprotected contact between two nodes results in an infection spreading from the infected node to the susceptible node with a probability randomly generated from the unit interval.  We assume that nodes come into contact with each other a maximum of three times per day, and so edge costs take values in the set $\{0,1,2,3\},$ where the value $0$ is taken if the edge $\{i,j\} \notin \Edges,$ and the other values are chosen uniformly at random if $\{i,j\} \in \Edges.$  Because treatments for sexually transmitted infections (e.g. chlamydia) often take in excess of a week to be effective \cite{CDC2018}, the Markov chain used to model the infectious compartments was chosen so as to make the distribution of times from infection to susceptibility uniformly distributed on the set $\{7,8,9,10\}.$

A plot demonstrating the effect of varying $\tune$ is given in Figure \ref{fig:tuning}, where the lightest shaded region contains the middle $98\%$ of the $100$ sample paths generated, the darker shade of blue contains the middle $80\%$ of the sample paths generated, and the central blue line gives the sample average.  As anticipated in Remark \ref{rem:tuning}, there is a continuous range of values $\tune$ over which the controller's behavior markedly changes.  For this particular example, this region was found to be the interval $[.8, .9].$  Note that in general, appropriate values of $\tune$ must be determined for each fixed model.  It may well be possible to automate a procedure for finding appropriate values of $\tune$ online.  However, developing and applying any such method would likely be quite technically involved, and is best saved as a task for future work.
	
The behavior of the controller with $\tune = 0.85$ is given in Figure \ref{fig:sim}, where the lightest shaded region contains the middle $98 \%$ of the 100 generated samples, the darker shaded region contains the middle $80 \%$ of the generated samples, and the central blue line gives the sample average of the process.  We see that the controller drives the epidemic out of the network quickly, while using resources at a lesser rate than one which protects all susceptible nodes at all times.
\begin{figure}
	\centering
	\begin{subfigure}{0.5\textwidth}
		\centering
		\includegraphics[width=\textwidth]{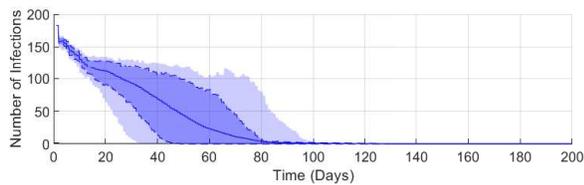}
		\caption{The number of infected nodes as a function of time.} \label{subfig:infection}
	\end{subfigure}
	
	\begin{subfigure}{0.5\textwidth}
		\centering
		\includegraphics[width=\textwidth]{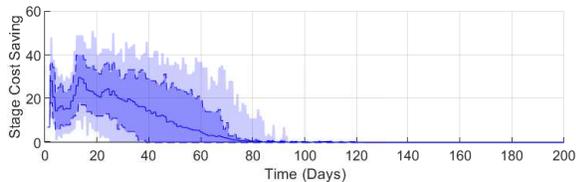}
		\caption{Stage cost savings compared against protecting all nodes.} \label{subfig:stagecost}
	\end{subfigure}
	\caption{A study of the controller performance at $\tune = 0.85.$  Figure \eqref{subfig:infection} demonstrates that the controller eliminates the epidemic from the network quickly, despite its aggressive decisions.  Figure \eqref{subfig:stagecost} shows that the controller chooses actions with considerable cost savings when compared against protecting all susceptible nodes.} \label{fig:sim}
\end{figure}

\section{Conclusions and Future Work} \label{sec:conclude}
In this paper, we have developed a controller which allocates discrete protective devices in order to mitigate the spread of an epidemic process.  Such a controller could be of use in fighting many forms of biological disease, which often do not confer immunity to people that have survived infections, and so are well-modeled by $SIS$-type processes.

As topics for future work, one may consider the task of developing efficient observers and estimators for the process' state and parameters.  While Remark \ref{rem:imperfect} suggests how the controller can be used in the case where the state and spreading parameters are not perfectly known, developing numerically efficient methods for providing the required distributions remains an important open task.  We believe the work presented here provides clear motivation for future researchers to engage in such work, and so is valuable itself.

\bibliography{library}
\bibliographystyle{ieeetr}

\end{document}